\definecolor{webgreen}{rgb}{0,0,1}
\definecolor{recrown}{rgb}{1,.2,.6}
\begin{document}
\newtheorem{theorem}{Theorem}
\newtheorem{corollary}[theorem]{Corollary}
\newtheorem{lemma}[theorem]{Lemma}
\theoremstyle{definition}
\newtheorem{example}{Example}
\newtheorem*{examples}{Examples}
\newtheorem*{notation}{Notation}
\theoremstyle{theorem}
\newtheorem{thmx}{Theorem}
\renewcommand{\thethmx}{\text{\Alph{thmx}}}
\newtheorem{lemmax}{Lemma}
\renewcommand{\thelemmax}{\text{\Alph{lemmax}}}
\hoffset=-0cm
\theoremstyle{definition}
\newtheorem*{definition}{Definition}
\newtheorem*{remark}{\bf Remark}
\title{\bf Some factorization results for bivariate polynomials}
\author{Nicolae Ciprian Bonciocat$^1$}
\author{Rishu Garg$^{2}$ {\large \orcidlink{0009-0008-2348-8340}}}
\author{Jitender Singh$^{3,\dagger}$ {\large \orcidlink{0000-0003-3706-8239}}}
\address[1]{Simion Stoilow Institute of Mathematics of the Romanian Academy, Research Unit 7, P.O. Box 1-764, Bucharest 014700, Romania\newline
{\tt nicolae.bonciocat@imar.ro}}
\address[2,3]{Department of Mathematics,
Guru Nanak Dev University, Amritsar-143005, India\newline 
{\tt rishugarg128@gmail.com}, {\tt jitender.math@gndu.ac.in}}
\markright{}
\date{}
\footnotetext[2]{


2020MSC: {Primary 11R09; 12J25; 12E05; 11C08}\\

\emph{Keywords}: non-Archimedean absolute value; Perron's irreducibility criterion; multivariate polynomial.
}
\maketitle
\newcommand{\K}{\mathbb{K}}
\begin{abstract}
We provide upper bounds on the total number of irreducible factors,
and in particular irreducibility criteria for some classes of bivariate polynomials $f(x,y)$ over an arbitrary field $\K$. Our results rely on information on the degrees of the coefficients of $f$, and on information on the factorization of the constant term and of the leading coefficient of $f$, viewed as a polynomial in $y$ with coefficients in $\K[x]$. In particular, we provide a generalization of the bivariate version of Perron's irreducibility criterion, and similar results for polynomials in an arbitrary number of indeterminates. The proofs use non-Archimedean absolute values, that are suitable for finding information on the location of the roots of $f$ in an algebraic closure of $\K(x)$.
\end{abstract}
\section{Introduction}
 Many of the classical irreducibility results for univariate integer polynomials $f(x)$ rely on information on the canonical decomposition of their coefficients, such as the famous irreducibility criteria of Sch\"onemann \cite{Schonemann}, Eisenstein \cite{Eisenstein}, and Dumas \cite{Dumas}, or on information on the canonical decomposition of the values that they assume at some suitable integer arguments, such as the irreducibility criteria of St\"ackel \cite{Stackel}, Dorwart \cite{Dorwart}, Ore \cite{Ore}, Weisner \cite{Weisner}, and Cohn \cite{PolyaSzego}. Other results, such as P\'olya's irreducibility criterion \cite{Polya} rely only on the magnitude of $f(m_1),\dots ,f(m_n)$, with disregard to their canonical decomposition, for distinct integers $m_1,\dots ,m_n$, or on the comparative size of the coefficients, as in the irreducibility criterion of Perron \cite{Perron}. For a quick review of some classical or more recent such results the reader is referred to \cite{Guersenzvaig1} and \cite{SKJS2023}, for instance.
In more recent times, many fundamental results and new ideas appeared on this topic, for instance in the works of Brillhart, Filaseta and Odlyzko \cite{Brillhart}, Cole, Dunn, Filaseta and Gross \cite{CDF}, \cite{Filaseta2}, \cite{FilasetaGross}, Murty \cite{RamMurty}, Girstmair \cite{Girstmair} (see also \cite{J-S-2}), Guersenzvaig \cite{Guersenzvaig1}, Filaseta and Luckner \cite{FilasetaLuckner}, and Weintraub \cite{Weintraub}, to name just a few. Other recent criteria are inspired by some results on Hilbert Irreducibility Theorem, and use some properties of the resultant of two polynomials, combined with information on the magnitude of their coefficients. Here we refer the reader to some elegant methods developed by Cavachi, V\^aj\^aitu and Zaharescu in \cite{CAV}, \cite{CVZ1} and \cite{CVZ2}, which are useful in the study of linear combinations of relatively prime polynomials, and which have been adapted by various authors for the study of compositions and multiplicative convolutions of polynomials.

 Finding information on the location of the roots is one of the main ingredients in the proofs of many of these results. Most of these criteria admit natural generalizations to the multivariate case. Given a bivariate polynomial $f(x,y)$ over an arbitrary field $\K$, if we regard $f$ as a polynomial in $y$ with coefficients in $\K[x]$, a natural way to study its factorization is to try to adapt the methods available in the univariate case. One of the most suitable ways to study the location of the roots of $f$ in an algebraic closure of $K(x)$ is to use non-Archimedean absolute values, and to employ the ideas and the techniques in the univariate case, where the usual absolute value is used. Such an approach was used for instance in \cite{NBZ2008}, \cite{BBCM2012}, \cite{BBBC2022} and \cite{BBBCM2023} to obtain multivariate versions of the irreducibility criteria of Cohn and P\'olya, and to derive irreducibility results and bounds on the number of irreducible factors counted with multiplicities for some classes of multivariate polynomials.

 The irreducibility criterion of Perron \cite{Perron} for integer univariate polynomials was recently generalized in \cite{JRG2023} and \cite{JRG2}, and has the following natural generalization for bivariate polynomials over an arbitrary field. \begin{thmx}[Perron \cite{Perron}]\label{th:B}
Let $\K$ be a field and $f=a_0(x)+\cdots+a_{n-1}(x)y^{n-1}+a_n(x)y^n\in \K[x,y]$ with $n\geq 2$, $a_0,\ldots,a_{n-1}\in \K[x]$, $a_n\in \K$, and $a_0a_n\neq 0$. If
\begin{eqnarray*}
\deg a_{n-1}>\max\thinspace \{\deg a_0,\deg a_1,\dots,\deg a_{n-2}\},
\end{eqnarray*}
then $f$ is irreducible over $\K[x]$.
\end{thmx}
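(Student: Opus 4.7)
The plan is to use a non-Archimedean absolute value on $\K(x)$ to locate the roots of $f$ in $\overline{\K(x)}$, and then imitate the classical proof of Perron's criterion. Equip $\K(x)$ with the absolute value $|g/h|=e^{\deg g-\deg h}$ (for $g,h\in \K[x]$ with $h\neq 0$), and extend it uniquely to an algebraic closure $\overline{\K(x)}$, passing if needed through the completion $\K(\!(x^{-1})\!)$. Under this absolute value $|a_n|=1$, since $a_n\in\K\setminus\{0\}$, while $|a_i|=e^{\deg a_i}$ for $i<n$. Because $a_n$ is a unit of $\K[x]$, we may assume without loss of generality that $f$ is monic in $y$.

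The key step is a Newton polygon computation. Consider the lower convex hull of the points $(i,-\deg a_i)$ for $i=0,1,\dots,n$, with the convention $\deg a_n=0$. The hypothesis that $\deg a_{n-1}$ strictly exceeds $\deg a_j$ for all $j\leq n-2$ forces $(n-1,-\deg a_{n-1})$ to lie strictly below every other point whose abscissa is less than $n$. Consequently the rightmost edge of the Newton polygon is the segment from $(n-1,-\deg a_{n-1})$ to $(n,0)$, of slope $\deg a_{n-1}$ and horizontal length $1$, while every remaining edge has strictly negative slope. By the standard theorem on Newton polygons for a non-Archimedean valued field, $f$ has a unique root $\theta_1\in\overline{\K(x)}$ with $|\theta_1|=e^{\deg a_{n-1}}>1$; this root is simple, and the remaining $n-1$ roots $\theta_2,\dots,\theta_n$ all satisfy $|\theta_i|<1$.

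To finish, suppose for contradiction that $f=g\cdot h$ in $\K[x][y]$ with $\deg_y g\geq 1$ and $\deg_y h\geq 1$. Since $f$ is monic in $y$, the leading coefficients of $g$ and $h$ in $y$ are inverse units in $\K[x]$, so after rescaling by elements of $\K\setminus\{0\}$ we may take both $g$ and $h$ monic in $y$. Since $\theta_1$ is a simple root of $f$, it belongs to exactly one of $g$ or $h$; say $g(\theta_1)=0$. Then every root of $h$ is one of $\theta_2,\dots,\theta_n$ and hence has absolute value strictly less than $1$. Writing $h(x,0)$ for the constant term of $h$ viewed as a polynomial in $y$, we obtain $|h(x,0)|=\prod_{\beta}|\beta|<1$, the product being over the roots of $h$. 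However, $h(x,0)$ divides $f(x,0)=a_0\neq 0$ in $\K[x]$, so $h(x,0)\in\K[x]\setminus\{0\}$, giving $|h(x,0)|=e^{\deg h(x,0)}\geq 1$, a contradiction.

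The main obstacle is the Newton polygon step, which relies on the standard correspondence between slopes of the polygon and valuations of the roots in the algebraic closure of a non-Archimedean valued field; once this correspondence is invoked, the separation of one large root from $n-1$ small roots is immediate from the hypothesis. After that the closing contradiction is the familiar one from the univariate Perron criterion, with the usual archimedean absolute value replaced by the degree-based non-Archimedean one on $\K(x)$.
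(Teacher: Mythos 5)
Your proof is correct and follows essentially the same route the paper takes: it does not prove Theorem \ref{th:B} directly (referring instead to \cite{Bonciocat2}), but the statement is the case $j=n-1$ of Theorem \ref{th:2}, whose proof uses the same degree-based non-Archimedean absolute value, the same root-location step (the paper's alternative, Newton-polygon proof of Theorem \ref{th:6} is exactly your computation), and the same final contradiction via the constant term of a factor all of whose roots lie in the open unit disk.
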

 For a proof of Theorem \ref{th:B} that relies on the use of non-Archimedean absolute values, and its extension to an arbitrary number of indeterminates, we refer the reader to \cite{Bonciocat2}.

 In this paper, we will first prove several factorization results for some classes of bivariate polynomials $f(x,y)=\sum a_i(x)y^i$ over a given field $\K$, that provide upper bounds for the number of irreducible factors of $f$ over $\K[x]$, counted with their multiplicities. These bounds depend only on the factorization of $a_0$ and $a_n$, and hold for polynomials for which the degrees of $a_0,\dots ,a_n$ satisfy certain inequalities.
In particular, these results provide irreducibility criteria for bivariate polynomials for which $a_0$ or $a_n$ is irreducible over $\K$. We will then prove some extensions of Theorem \ref{th:B} to the case when a coefficient other than $a_{n-1}$ has degree greater than the degrees of all the other coefficients. Several similar results will be finally obtained for polynomials in an arbitrary number of indeterminates. Our proofs will use non-Archimedean absolute values, and some explanations will rely on arguments coming from Newton Polygon theory, via Dumas' Theorem \cite{Dumas}. We mention here that the bounds on the number of irreducible factors that we will obtain are best possible, and examples in this respect will be provided.

 The paper is organized as follows. The main results are stated in Section \ref{sec:2}, and their proofs are given in Section \ref{sec:3}. In the last section of the paper we provide several examples of infinite families of irreducible polynomials, as well as families of polynomials for which bounds on their total number of irreducible factors can be deduced from our results.
\section{Main results}\label{sec:2}
 Our first results provide upper bounds on  the total number of factors for some classes of bivariate polynomials $f(x,y)=a_0(x)+a_1(x)y+\cdots +a_n(x)y^n\in \K[x,y]$, using information on the degrees of $a_i$ combined with information on the factorization of $a_0$ and $a_n$. Even if they follow by a similar argument, for the sake of symmetry we will also include in our statements the results for the reciprocal of $f$ with respect to $y$. Such results are often overlooked, even if they usually provide useful information on the factorization of $f$, and as we shall later see, by simultaneously studying $f$ and its reciprocal, one may obtain sharper results.

\begin{theorem}\label{th:0}
Let $\K$ be a field and $f=a_0(x)+a_1(x)y+\cdots+a_n(x) y^n\in \K[x,y]$, with $n\geq 2$, $a_0,\ldots, a_n\in \K[x]$, $a_0a_n\neq 0$. Assume that $f$ has no nonconstant factor in $\K[x]$, and let $\nu_0$ and $\nu_n$ be the number of irreducible factors of $a_0(x)$ and $a_n(x)$ in $\K[x]$, respectively, counted with their multiplicities.

i) If $\deg a_0>\max \thinspace \{\deg a_1,\dots ,\deg a_n\}$, then $f$ is a product of at most $\nu_0$ irreducible polynomials over $\K[x]$.

ii) If $\deg a_n>\max \thinspace \{\deg a_0,\dots ,\deg a_{n-1}\}$, then $f$ is a product of at most $\nu_n$ irreducible polynomials over $\K[x]$.
\end{theorem}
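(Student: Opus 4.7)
The plan is to mimic the classical strategy behind Perron's criterion (Theorem~\ref{th:B}) via non-Archimedean absolute values, but adapted to exploit the dominance of the constant term in $y$ (or, by symmetry, of the leading coefficient). Fix any $c>1$ and equip $\K(x)$ with the non-Archimedean absolute value $|g/h|=c^{\deg g-\deg h}$, then extend it in the standard way to an algebraic closure $\overline{\K(x)}$; I will denote this extension again by $|\cdot|$. With this choice, the hypothesis on degrees translates directly into $|a_0|>|a_i|$ for every $i\ge 1$ in part~(i), and dually into $|a_n|>|a_i|$ for every $i\le n-1$ in part~(ii).

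For part (i), my first step is to show that every root $\theta\in\overline{\K(x)}$ of $f$, viewed as a polynomial in $y$, satisfies $|\theta|>1$. From $a_0=-\sum_{i=1}^{n}a_i\theta^i$ and the ultrametric inequality I get $|a_0|\le \max_{1\le i\le n}|a_i|\,|\theta|^i$; combined with $|a_0|>|a_i|$ for every $i\ge 1$ this forces some index $i$ with $|\theta|^i>1$, hence $|\theta|>1$.

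Next I consider an arbitrary factorization $f=f_1\cdots f_k$ into irreducibles in $\K[x,y]$. The assumption that $f$ has no nonconstant factor in $\K[x]$ guarantees that each $f_j$ has positive $y$-degree, so I may write $f_j=b_{j,0}(x)+\cdots+b_{j,m_j}(x)y^{m_j}$ with $m_j\ge 1$ and denote its roots in $\overline{\K(x)}$ by $\theta_{j,1},\dots,\theta_{j,m_j}$. These roots form a subset of the roots of $f$, so each has absolute value $>1$; Vieta's formula applied to $f_j$ then gives $|b_{j,0}|/|b_{j,m_j}|=\prod_{\ell=1}^{m_j}|\theta_{j,\ell}|>1$, which forces $\deg b_{j,0}>\deg b_{j,m_j}\ge 0$, so in particular each $b_{j,0}$ is a nonconstant element of $\K[x]$. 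Setting $y=0$ in the factorization yields $\prod_{j=1}^{k}b_{j,0}(x)=a_0(x)$, and since each of the $k$ factors on the left is nonconstant I conclude $k\le \nu_0$.

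Part (ii) I would deduce from part (i) applied to the reciprocal $\tilde f(x,y):=y^n f(x,1/y)=a_n(x)+a_{n-1}(x)y+\cdots+a_0(x)y^n$: the hypothesis of (ii) is exactly the hypothesis of (i) for $\tilde f$, and because $a_0 a_n\ne 0$ the map $g\mapsto\tilde g$ is a bijection on irreducible factorizations in $\K[x,y]$ that preserves both the number of factors and the property of having no nonconstant $\K[x]$-factor. The only genuinely delicate point is the availability and extendibility of the degree absolute value to $\overline{\K(x)}$, but this is standard and already used in \cite{Bonciocat2}; beyond that, everything reduces to a single application of the ultrametric inequality combined with Vieta's formulas.
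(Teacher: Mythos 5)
Your proposal is correct and follows essentially the same route as the paper: the degree-based non-Archimedean absolute value extended to $\overline{\K(x)}$, the observation that the dominance of $\deg a_0$ forces every root to have absolute value greater than $1$, Vieta's formulas applied to each irreducible factor to show its constant term (in $y$) is nonconstant in $x$, and the count $k\le\nu_0$ from $\prod_j b_{j,0}=a_0$, with part (ii) obtained via the reciprocal exactly as in the paper. The only cosmetic difference is that you derive $|\theta|>1$ directly rather than by contradiction.
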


In each one of the two cases in Theorem \ref{th:0} we may obtain a potentially sharper result, provided some information on the factorization of both $a_0$ and $a_n$ is available, as in the following result.
\begin{theorem}\label{th:1}
Let $\K$ be a field and $f=a_0(x)+a_1(x)y+\cdots+a_n(x) y^n\in \K[x,y]$, with $n\geq 2$, $a_0,\ldots, a_n\in \K[x]$, $a_0a_n\neq 0$. Assume that $f$ has no nonconstant factor in $\K[x]$, and let $\nu_0$ and $\nu_n$ be the number of irreducible factors of $a_0(x)$ and $a_n(x)$ in $\K[x]$, respectively, counted with their multiplicities. Then $f$ is a product of at most $\nu=\min\{\nu_0,\nu_n\}$ irreducible polynomials over $\K[x]$ in each one of the following two cases:

i) $a_0$ is reducible over $\K$, $\deg a_n\geq \deg a_0-\deg q$, where $q\in \K[x]$ is an irreducible factor of $a_0$ of smallest degree, and $\deg a_0>\max \thinspace \{\deg a_1,\dots ,\deg a_n\}$;

ii) $a_n$ is reducible over $\K$, $\deg a_0\geq \deg a_n-\deg q$, where $q\in \K[x]$ is an irreducible factor of $a_n$ of smallest degree, and $\deg a_n>\max \thinspace \{\deg a_0,\dots ,\deg a_{n-1}\}$.
\end{theorem}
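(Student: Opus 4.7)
The plan is to set up a non-Archimedean absolute value on $\overline{\K(x)}$ that detects the degree, combine the resulting root bounds with a product-of-leading-coefficients counting argument, and then extract the extra inequality $k \le \nu_n$ from the hypothesis $\deg a_n \ge \deg a_0 - \deg q$. For a real $\alpha > 1$, define $|g|_\alpha = \alpha^{\deg g}$ for $g \in \K[x]$, extend it to $\K(x)$ multiplicatively, and then to an algebraic closure $\overline{\K(x)}$. In case (i), the hypothesis $\deg a_0 > \max\{\deg a_1, \dots, \deg a_n\}$ together with the ultrametric inequality applied to $0 = \sum_i a_i \theta^i$ forces every root $\theta$ of $f$ in $\overline{\K(x)}$ to satisfy $|\theta|_\alpha > 1$; otherwise $|a_0|_\alpha \le \max_{i \ge 1}|a_i|_\alpha |\theta|_\alpha^i \le \max_{i \ge 1}|a_i|_\alpha < |a_0|_\alpha$, a contradiction.

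Next I would factor $f = f_1 \cdots f_k$ into irreducibles over $\K[x]$; since $f$ has no nonconstant factor in $\K[x]$, each $f_j$ has positive $y$-degree, and I write $f_j = c_j(x) y^{m_j} + \cdots + b_j(x)$ so that $\prod_j b_j = a_0$ and $\prod_j c_j = a_n$. The product of the roots of $f_j$ equals $\pm b_j / c_j$, and each root has $|\cdot|_\alpha > 1$, so $|b_j|_\alpha > |c_j|_\alpha$, i.e., $\deg b_j - \deg c_j \ge 1$ for every $j$. Summing over $j$ gives $\deg a_0 - \deg a_n \ge k$, and combining with the hypothesis $\deg a_n \ge \deg a_0 - \deg q$ yields the auxiliary bound $k \le \deg q$. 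At this stage $k \le \nu_0$ is immediate from Theorem \ref{th:0}(i): each $b_j$ is nonconstant, hence has at least one irreducible factor, so $\nu_0 = \sum_j \nu(b_j) \ge k$.

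The main new step is to establish $k \le \nu_n$, and this is where the hypothesis on $\deg a_n$ is genuinely used. I would first handle $k \ge 2$ by contradiction: if some leading coefficient, say $c_1$, were a nonzero constant, then $b_1$ would be a nonconstant divisor of $a_0$, so every irreducible factor of $b_1$ would be an irreducible factor of $a_0$ and hence have degree at least $\deg q$, giving $\deg b_1 \ge \deg q$. Adding the inequalities $\deg b_j - \deg c_j \ge 1$ for the remaining $k-1$ indices produces $\deg a_0 - \deg a_n \ge \deg q + (k-1) > \deg q$, contradicting the hypothesis. Hence every $c_j$ is nonconstant when $k \ge 2$, and $\nu_n = \sum_j \nu(c_j) \ge k$. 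For $k = 1$, since $a_0$ is reducible one has $\deg a_0 \ge 2\deg q$, so the hypothesis forces $\deg a_n \ge \deg q \ge 1$ and hence $\nu_n \ge 1 = k$ as well.

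Case (ii) should follow by symmetry, applied to the reciprocal polynomial $\tilde f(x,y) = y^n f(x,1/y) = a_n + a_{n-1}y + \cdots + a_0 y^n$, which satisfies precisely the hypotheses of case (i) with the roles of $a_0$ and $a_n$ (and of $\nu_0$ and $\nu_n$) interchanged. The main obstacle is exactly the sharpening from $\nu_0$ to $\min\{\nu_0, \nu_n\}$: the valuation argument naturally controls the constant terms $b_j$, but the leading coefficients $c_j$ are a priori permitted to be units; the extra hypothesis $\deg a_n \ge \deg a_0 - \deg q$ is precisely calibrated to rule this out via the degree count above.
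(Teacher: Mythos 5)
Your proposal is correct and follows essentially the same route as the paper: the absolute value $\rho^{\deg}$, the bound $\|\theta\|_\rho>1$ on the roots, the identities $\prod_j b_j=a_0$, $\prod_j c_j=a_n$, and the degree count showing that $\deg a_n\geq\deg a_0-\deg q$ together with $\deg b_j>\deg c_j$ and $\deg b_j\geq\deg q$ forces every leading coefficient $c_j$ to be nonconstant. The only (immaterial) difference is that you argue by contradiction on a single constant $c_j$ while the paper derives $\deg c_i>\deg b_i-\deg q\geq 0$ directly for each $i$; the computation is the same.
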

We mention that the upper bounds on the number of irreducible factors of $f$ in Theorem \ref{th:0} and Theorem \ref{th:1} are best possible, in the sense that there exist examples of infinite families of bivariate polynomials for which these bounds are attained. To see this, let for example $p$ and $q$ be prime numbers, $n$ a positive integer, and let
\begin{eqnarray*}
f_1(x,y) = pq+(p+q)x^n+x^{2n}+(p+q+2x^n)y^n+y^{2n} \in \mathbb{Q}[x,y].
\end{eqnarray*}
One may check that $f_1$ satisfies the hypothesis of Theorem \ref{th:0} i) with $a_0=pq+(p+q)x^n+x^{2n}$, which is the product of two irreducible factors in $\mathbb{Q}[x]$, namely $p+x^n$ and $q+x^n$, the first one being Eisensteinian with respect to $p$, and the second one with respect to $q$. By Theorem \ref{th:0}, we conclude that our polynomial $f_1(x,y)$ is the product of at most two irreducible factors over $\mathbb{Q}[x]$. On the other hand, we observe that actually $f_1$ decomposes as
\[
f_1(x,y)=(p+x^n+y^n)(q+x^n+y^n),
\]
which is the product of two irreducible factors over $\mathbb{Q}[x]$, as $p+x^n+y^n$ and $q+x^n+y^n$ are Eisensteinian too, the first one with respect to the irreducible polynomial $p+x^n$, and the second one with respect to the irreducible polynomial $q+x^n$. For an example related to Theorem \ref{th:0} ii), one may for instance consider the polynomial
\begin{eqnarray*}
f_2(x,y) = 1+(p+q+2x^n)y^n+(pq+(p+q)x^n+x^{2n})y^{2n}\in \mathbb{Q}[x,y],
\end{eqnarray*}
which is the reciprocal of $f_1$ with respect to $y$, and whose irreducible factors are precisely the reciprocals with respect to $y$ of the irreducible factors of $f_1$.

 Similarly, the bound in Theorem \ref{th:1} is best possible too, since it is attained by all the polynomials
\begin{eqnarray*}
f_3(x,y) = (r+x^2)^4+2(p+x^3)(r+x^2)^2y^n+(p+x^3)^2y^{2n}\in \mathbb{Q}[x,y],
\end{eqnarray*}
where $p$ and $r$ are prime numbers, and $n$ is a positive integer. Here, $a_0=(r+x^2)^4$ with $q(x)=r+x^2$, which is Eisensteinian with respect to $r$, and $a_{2n}=(p+x^3)^2$, with $p+x^3$ Eisensteinian with respect to $p$. Thus $\nu_0=4$, $\nu_{2n}=2$, $\deg a_{2n}=\deg a_0-\deg q=6$, and $\deg a_0=8>7=\max\{\deg a_{n},\deg a_{2n}\}$. By Theorem \ref{th:1} i), we conclude that $f_3$ is a product of at most $2=\min\{\nu_0,\nu_{2n}\}$ irreducible factors over $\mathbb{Q}[x]$. On the other hand, we actually have $f_3(x,y)=h(x,y)^2$, with $h(x,y)=(r+x^2)^2+(p+x^3)y^n$, which is irreducible over $\mathbb{Q}[x]$, being the reciprocal with respect to $y$ of the polynomial $(p+x^3)+(r+x^2)^2y^n$, which is Eisensteinian with respect to $p+x^3$. For an example related to Theorem \ref{th:1} ii), one may obviously choose the reciprocal of $f_3$ with respect to $y$.

 The following irreducibility criteria are immediate applications of Theorems \ref{th:0} and \ref{th:1}.
\begin{corollary}\label{c2}
Let $\K$ be a field. Let $f=a_0(x)+a_1(x)y+\cdots+a_n(x) y^n\in \K[x,y]$, with $n\geq 2$, $a_0,\ldots, a_n\in \K[x]$, $a_0a_n\neq 0$, be such that $f$ has no nonconstant factor in $\K[x]$ and
\begin{eqnarray*}
\deg a_0>\max\thinspace \{\deg a_1, \ldots, \deg a_n \}.
\end{eqnarray*}
If $a_0$ is irreducible in $\K[x]$, or if $a_n$ is irreducible in $\K[x]$ and $\deg a_n\geq \deg a_0-\deg q$, with $q\in \K[x]$ an irreducible factor of $a_0$ of smallest degree, then $f$ is irreducible over $\K[x]$.
\end{corollary}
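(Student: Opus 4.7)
The plan is to derive this irreducibility criterion directly from Theorems \ref{th:0} and \ref{th:1}, splitting the argument according to which of the two sufficient conditions is in force. The key observation is that when either $a_0$ or $a_n$ is irreducible, the corresponding count $\nu_0$ or $\nu_n$ equals $1$, so the upper bound on the total number of irreducible factors produced by the factorization theorems collapses to $1$, forcing irreducibility.

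First, suppose $a_0$ is irreducible in $\K[x]$. Then $\nu_0=1$. Since we are given that $f$ has no nonconstant factor in $\K[x]$ and that $\deg a_0>\max\{\deg a_1,\ldots,\deg a_n\}$, the hypotheses of Theorem \ref{th:0}(i) are satisfied. That theorem then yields that $f$ is a product of at most $\nu_0=1$ irreducible polynomials over $\K[x]$. As $f$ has degree $n\geq 2$ in $y$ with $a_0a_n\neq 0$, it is nonconstant, so $f$ must itself be irreducible.

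Next, suppose $a_n$ is irreducible in $\K[x]$ with $\deg a_n\geq \deg a_0-\deg q$, where $q$ is an irreducible factor of $a_0$ of smallest degree. Then $\nu_n=1$. If $a_0$ is irreducible too, we are back in the previous case, so we may assume $a_0$ is reducible in $\K[x]$. All the hypotheses of Theorem \ref{th:1}(i) are then met, namely $a_0$ is reducible, $\deg a_n\geq \deg a_0-\deg q$, $\deg a_0>\max\{\deg a_1,\ldots,\deg a_n\}$, and $f$ has no nonconstant factor in $\K[x]$. Applying that theorem, $f$ is a product of at most $\min\{\nu_0,\nu_n\}=1$ irreducible polynomials over $\K[x]$, and therefore $f$ is irreducible.

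There is no real obstacle here beyond the bookkeeping: Theorem \ref{th:1}(i) explicitly requires $a_0$ to be reducible, so one has to notice that the case $a_0$ irreducible must be handled separately via Theorem \ref{th:0}(i), while the two together cover precisely the disjunction in the statement. Once this case split is made, the corollary is immediate.
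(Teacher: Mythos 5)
Your proof is correct and follows exactly the route the paper intends: Corollary \ref{c2} is stated there as an immediate application of Theorems \ref{th:0} and \ref{th:1}, with $\nu_0=1$ (resp.\ $\nu_n=1$) collapsing the bound to a single irreducible factor. Your explicit handling of the case split forced by the reducibility hypothesis on $a_0$ in Theorem \ref{th:1}(i) is the right bookkeeping and nothing is missing.
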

\begin{corollary}\label{c2reciprocal}
Let $\K$ be a field. Let $f=a_0(x)+a_1(x)y+\cdots+a_n(x) y^n\in \K[x,y]$, with $n\geq 2$, $a_0,\ldots, a_n\in \K[x]$, $a_0a_n\neq 0$, be such that $f$ has no nonconstant factor in $\K[x]$ and
\begin{eqnarray*}
\deg a_n >\max \thinspace \{\deg a_0, \ldots, \deg a_{n-1} \}.
\end{eqnarray*}
If $a_n$ is irreducible in $\K[x]$, or if $a_0$ is irreducible in $\K[x]$ and $\deg a_0\geq \deg a_n-\deg q$, where $q\in \K[x]$ is an irreducible factor of $a_n$ of smallest degree, then $f$ is irreducible over $\K[x]$.
\end{corollary}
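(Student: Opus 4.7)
The plan is to deduce Corollary \ref{c2reciprocal} directly from the two previously stated results, Theorem \ref{th:0} ii) and Theorem \ref{th:1} ii), by reading off the corresponding bounds on the number of irreducible factors in the two special situations and observing that these bounds collapse to $1$. No new estimate on roots or new application of non-Archimedean absolute values is needed.

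First I would dispose of the case where $a_n$ is irreducible in $\K[x]$. Since $f$ has no nonconstant factor in $\K[x]$ and $\deg a_n > \max\{\deg a_0,\dots,\deg a_{n-1}\}$, the hypotheses of Theorem \ref{th:0} ii) are met, with $\nu_n=1$. That theorem then guarantees that $f$ is a product of at most one irreducible polynomial over $\K[x]$, which is precisely the statement that $f$ is irreducible.

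Next I would handle the remaining case, namely $a_0$ irreducible in $\K[x]$ and $\deg a_0 \geq \deg a_n - \deg q$, where $q$ is an irreducible factor of $a_n$ of smallest degree. If $a_n$ is irreducible the previous paragraph already applies, so we may assume $a_n$ is reducible over $\K$. All hypotheses of Theorem \ref{th:1} ii) are then in force (the no-nonconstant-factor condition, the degree inequality $\deg a_n > \max\{\deg a_0,\dots,\deg a_{n-1}\}$, the reducibility of $a_n$, and the degree condition $\deg a_0 \geq \deg a_n - \deg q$), and since $\nu_0 = 1$ we conclude that $f$ factors into at most $\min\{\nu_0,\nu_n\} = 1$ irreducible polynomials over $\K[x]$, so $f$ is irreducible.

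Since the corollary is essentially a direct specialization of the preceding theorems, there is no substantive obstacle; the only care required is in the logical bookkeeping of the disjunction in the hypothesis, which is resolved by the case split above. I would also remark that this proof is entirely parallel to the one for Corollary \ref{c2}, with the roles of $a_0$ and $a_n$ interchanged, reflecting the symmetry obtained by passing to the reciprocal of $f$ with respect to $y$.
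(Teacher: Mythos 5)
Your proposal is correct and matches the paper's intent exactly: the paper states these corollaries as immediate applications of Theorems \ref{th:0} and \ref{th:1}, and your case split (Theorem \ref{th:0} ii) with $\nu_n=1$ when $a_n$ is irreducible; Theorem \ref{th:1} ii) with $\nu_0=1$ when $a_0$ is irreducible, $a_n$ reducible, and the degree condition holds) is the intended argument. The only point worth noting is that you correctly handled the bookkeeping that Theorem \ref{th:1} ii) formally assumes $a_n$ reducible, which your reduction to the first case resolves.
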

 We mention here that Theorem \ref{th:0} and Theorem \ref{th:1} extend  some of the results in \cite{J-S-4} and \cite{Bevelacqua} to the bivariate case and that the conclusion in Corollary \ref{c2} on the irreducibility of $f(x,y)$ over $\K[x]$ when $a_0$ is irreducible in $\K[x]$ also follows by Lemma 1 in \cite{NBZ2008}.

For the case when a coefficient of $f$ other than the leading one has degree greater than the degrees of all the other coefficients, we have the following factorization result, which generalizes the multivariate version of Perron's irreducibility criterion \cite{Perron}, \cite{Bonciocat2}.
\begin{theorem}\label{th:2}
Let $\K$ be a field, and let $f=a_0(x)+a_1(x)y+\cdots+a_n(x) y^n\in \K[x,y]$ with $n\geq 2$, $a_0,\ldots, a_{n-1}\in \K[x]$, $a_n\in \K$, $a_0 a_n\neq 0$, be such that there exists an index $j$ with $0\leq j\leq n-1$ for which
\begin{eqnarray*}
\deg a_j >\max_{i\neq j}\thinspace \deg a_i.
\end{eqnarray*}
Then $f$ is a product of at most $n-j$ irreducible polynomials over $\K[x]$. In particular, if $j=n-1$, then $f$ is irreducible over $\K[x]$.
\end{theorem}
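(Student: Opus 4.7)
The plan is to fix $c>1$ and work with the non-Archimedean absolute value on $\K(x)$ defined by $|g/h|=c^{\deg g - \deg h}$, extend it to a fixed algebraic closure $\overline{\K(x)}$, and analyze there the absolute values of the roots of $f$ viewed as a polynomial in $y$. Since $a_n\in \K^{\ast}$, dividing $f$ by $a_n$ lets me assume without loss of generality that $f$ is monic in $y$; this does not alter the number of irreducible factors in $\K[x][y]$, and the hypothesis $a_0\neq 0$ guarantees that every root $\theta\in\overline{\K(x)}$ of $f$ is nonzero.

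The core step is a root-counting claim: exactly $n-j$ roots of $f$ satisfy $|\theta|>1$. To prove it, I would draw the Newton polygon of $f$ associated with the valuation $v=-\deg$ on $\K[x]$, built from the points $(i,-\deg a_i)$ for $0\leq i\leq n$. The hypothesis $\deg a_j>\deg a_i$ for every $i\neq j$ places $(j,-\deg a_j)$ strictly below all other points, so the lower convex hull consists of exactly two edges: one from $(0,-\deg a_0)$ to $(j,-\deg a_j)$, of strictly negative slope $(\deg a_0 - \deg a_j)/j$; and one from $(j,-\deg a_j)$ to $(n,0)$, of strictly positive slope $\deg a_j/(n-j)$. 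Dumas' theorem, applied to the unique extension of $v$ to $\overline{\K(x)}$, then forces exactly $j$ roots to satisfy $v(\theta)>0$, i.e.\ $|\theta|<1$ (``small''), and exactly $n-j$ roots to satisfy $v(\theta)<0$, i.e.\ $|\theta|>1$ (``large'').

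To finish, I would let $f=h_1\cdots h_r$ be a factorization into monic-in-$y$ irreducible factors in $\K[x][y]$ (monicity is available since $f$ itself is monic in $y$). Each $h_i$ corresponds to a submultiset $S_i$ of the roots of $f$, with
\[
h_i(0)=(-1)^{\deg_y h_i}\prod_{\theta\in S_i}\theta \;\in\; \K[x]\setminus\{0\},
\]
so $|h_i(0)|=c^{\deg h_i(0)}\geq 1$. But $|h_i(0)|=\prod_{\theta\in S_i}|\theta|$, and every small root has $|\theta|<1$, so $S_i$ cannot consist entirely of small roots; hence each $S_i$ must contain at least one large root. Since there are exactly $n-j$ large roots, this yields $r\leq n-j$, and the special case $j=n-1$ leaves a single large root, forcing $r=1$ and recovering Theorem~\ref{th:B}.

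I expect the main obstacle to be the root-counting step: verifying that the hypothesis $\deg a_j>\max_{i\neq j}\deg a_i$ produces precisely the two-edge Newton polygon described (with no spurious intermediate vertex disturbing the slope pattern), and that Dumas' theorem applies uniformly across the extension of $v$ to $\overline{\K(x)}$. Once this localization of the roots is secured, the remainder of the argument is a clean consequence of the multiplicativity of the absolute value together with the integrality of each $h_i(0)$ in $\K[x]$.
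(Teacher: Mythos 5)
Your proposal is essentially the paper's own argument: the same absolute value $\|g\|_\rho=\rho^{\deg g}$, the same localization of the roots ($j$ roots inside the unit disk, $n-j$ outside, which the paper isolates as a separate statement, Theorem \ref{th:6}, and proves by exactly the Dumas/Newton-polygon argument you sketch), and the same concluding count via $\|h_i(x,0)\|_\rho\geq 1$ for each factor (you phrase it as ``each factor must absorb a large root,'' the paper as a pigeonhole contradiction; these are the same step). One small correction to the step you yourself flagged as the potential obstacle: the lower convex hull need \emph{not} consist of exactly two edges. For instance with $\deg a_0=0$, $\deg a_1=8$, $\deg a_2=10$, $j=2$, $n=3$, the hull has vertices at $(0,0)$, $(1,-8)$, $(2,-10)$, $(3,0)$, hence three edges. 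What is true, and all that your argument needs, is that $(j,-\deg a_j)$ is the unique lowest vertex, so by convexity every edge to its left has strictly negative slope and every edge to its right has strictly positive slope; summing the widths on each side still gives exactly $j$ roots with $\|\theta\|_\rho<1$ and $n-j$ with $\|\theta\|_\rho>1$. With that adjustment the proof is complete and matches the paper's.
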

 One may improve the bound on the number of irreducible factors in Theorem \ref{th:2} for the case when $a_0$ too belongs to $\K$.
\begin{theorem}\label{th:2withreciprocal}
Let $\K$ be a field, and let $f=a_0(x)+a_1(x)y+\cdots+a_n(x) y^n\in \K[x,y]$ with $n\geq 2$, $a_1,\ldots, a_{n-1}\in \K[x]$, $a_0,a_n\in \K$, $a_0 a_n\neq 0$, be such that there exists an index $j$ with $1\leq j\leq n-1$ for which
\begin{eqnarray*}
\deg a_j>\max_{i\neq j}\thinspace \deg a_i.
\end{eqnarray*}
Then $f$ is a product of at most $\min\{j,n-j\}$ irreducible polynomials over $\K[x]$. In particular, if $j=1$ or $j=n-1$, then $f$ is irreducible over $\K[x]$.
\end{theorem}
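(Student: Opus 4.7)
The plan is to obtain both halves of the bound $\min\{j,n-j\}$ by applying Theorem \ref{th:2} twice: once directly to $f$, and once to its reciprocal with respect to $y$. Since both $a_0$ and $a_n$ are constants in $\K$, the hypotheses of Theorem \ref{th:2} are symmetric with respect to this reciprocal operation, and this is what gives the improvement.

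First, I would note that the hypotheses of Theorem \ref{th:2withreciprocal} already imply those of Theorem \ref{th:2} for $f$: the index $j$ lies in $\{1,\dots,n-1\}\subseteq\{0,\dots,n-1\}$, the leading coefficient $a_n$ is in $\K$, and $a_0a_n\neq 0$. Moreover, $f$ has no nonconstant factor in $\K[x]$ because any such factor would divide $a_n\in\K^{\ast}$. Hence a direct application of Theorem \ref{th:2} gives the bound of at most $n-j$ irreducible factors over $\K[x]$.

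Next, I would introduce the reciprocal $\tilde f(x,y):=y^nf(x,1/y)=a_n+a_{n-1}(x)y+\cdots+a_1(x)y^{n-1}+a_0y^n$. Its coefficients are $\tilde a_i=a_{n-i}$, so the leading coefficient $\tilde a_n=a_0$ and constant term $\tilde a_0=a_n$ are in $\K$ and nonzero. Setting $\tilde\jmath:=n-j\in\{1,\dots,n-1\}$, one has $\deg\tilde a_{\tilde\jmath}=\deg a_j>\max_{i\neq \tilde\jmath}\deg\tilde a_i$, so Theorem \ref{th:2} applies to $\tilde f$ and gives at most $n-\tilde\jmath=j$ irreducible factors over $\K[x]$. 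The remaining point is to transfer this count back to $f$: writing a factorization $f=g_1\cdots g_k$ with $\deg_y g_s=d_s$, one checks that $\tilde f(x,y)=\prod_{s=1}^{k}y^{d_s}g_s(x,1/y)=\prod_{s=1}^k\tilde g_s(x,y)$, and, because $a_0\in\K^{\ast}$ forces each $g_s(x,0)\in\K^{\ast}$, each $\tilde g_s$ has $y$-degree exactly $d_s$. Conversely $g_s$ is recovered as the $y$-reciprocal of $\tilde g_s$, so $g_s$ is irreducible if and only if $\tilde g_s$ is. Thus $f$ and $\tilde f$ have the same number of irreducible factors counted with multiplicity, and $f$ has at most $j$ irreducible factors.

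Combining the two bounds yields at most $\min\{j,n-j\}$ irreducible factors; the special cases $j=1$ and $j=n-1$ give $\min\{j,n-j\}=1$, which, together with $\deg_y f=n\geq 2$, forces irreducibility of $f$. The only delicate step is the bookkeeping in the reciprocal correspondence, specifically checking that no $y$-degree is lost when passing from $g_s$ to $\tilde g_s$; this is exactly where the hypothesis $a_0\neq 0$ (which goes beyond what Theorem \ref{th:2} alone uses) is essential.
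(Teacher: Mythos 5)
Your proof is correct and follows essentially the same route as the paper: apply Theorem \ref{th:2} directly to $f$ to get the bound $n-j$, and to the $y$-reciprocal of $f$ (with index $n-j$) to get the bound $j$. The bookkeeping you include to verify that passing to the reciprocal preserves the number and irreducibility of the factors (using $a_0\in\K^{\ast}$ so that no $y$-degree is lost) is a point the paper merely asserts, but it is the same argument.
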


 The upper bound $\min\{j,n-j\}$ in the statement of Theorem \ref{th:2withreciprocal} is also best possible, in the sense that there exist polynomials for which this bound is attained. To see this, consider the polynomial $f_4(x,y)=(1+xy+y^2)^j\in\mathbb{Q}[x,y]$ with $j$ a positive integer. We have $\deg _yf_4=2j$ and $\deg _xf_4=j$, and by using the multinomial rule, if we write $f_4$ as a polynomial in $y$ with coefficients in $\mathbb{Q}[x]$, say $f_4=a_0(x)+a_1(x)y+\cdots +a_{2j}(x)y^{2j}$, we see that $f_4$ has a unique coefficient of maximal degree, which is $a_j$, and $\deg a_j=j$. Since $a_0=a_{2j}=1$, we deduce by Theorem \ref{th:2withreciprocal} that $f_4$ is the product of at most $j$ irreducible factors. Indeed, this is the case, as one may easily check that $1+xy+y^2$ is irreducible over $\mathbb{Q}[x]$. A more general related example with a polynomial of arbitrary degree $n$ and $j$ a divisor of $n$ will be given in the last section of the paper.

 Interestingly, Theorem \ref{th:2} extends to arbitrary polynomials, with no need to asking $a_n$ to be a unit in $\K[x]$, as in the following result.
 \begin{theorem}\label{th:3}
Let $\K$ be a field and $f=a_0(x)+a_1(x)y+\cdots+a_n(x) y^n\in \K[x,y]$ with $n\geq 2$, $a_0,\ldots, a_n\in \K[x]$,  $a_0 a_n\neq 0$. Assume that $f$ has no nonconstant factor in $\K[x]$. If  there exists an index $j$ with $0\leq j\leq n-1$ for which
\begin{eqnarray*}
\deg a_j>\max_{i\neq j}\thinspace \{\deg a_i+(j-i)\deg a_n\},
\end{eqnarray*}
then $f$ is a product of at most $n-j$ irreducible polynomials over $\K[x]$. In particular, if $j=n-1$, then $f$ is irreducible over $\K[x]$.
\end{theorem}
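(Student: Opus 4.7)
The plan is to reduce Theorem \ref{th:3} to Theorem \ref{th:2} via the substitution $z = y\,a_n(x)$, which renders the leading coefficient a unit and brings the dominant-coefficient hypothesis into the exact form required by Theorem \ref{th:2}. First, I introduce
\[
\hat{f}(x,z) := a_n(x)^{n-1}\,f\!\left(x,\,\frac{z}{a_n(x)}\right) = z^n + \sum_{i=0}^{n-1} a_i(x)\,a_n(x)^{n-1-i}\,z^i,
\]
which lies in $\K[x,z]$ (because $n-1-i\ge 0$ for $i\le n-1$) and is monic of degree $n$ in $z$. Writing $\hat{a}_i$ for the coefficient of $z^i$ in $\hat{f}$, one computes $\deg \hat{a}_i = \deg a_i + (n-1-i)\deg a_n$ for $i<n$ and $\deg \hat{a}_n = 0$. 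The inequality $\deg a_j > \deg a_i + (j-i)\deg a_n$ becomes $\deg \hat{a}_j > \deg \hat{a}_i$ for $i\ne j,n$ after adding $(n-1-j)\deg a_n$ to both sides, while the instance $i = n$ is precisely the statement $\deg \hat{a}_j > 0 = \deg \hat{a}_n$. Hence $\hat{f}$ satisfies the hypothesis of Theorem \ref{th:2} with the same index $j$, and so $\hat{f} = \hat{g}_1\cdots \hat{g}_m$ with $m \le n-j$ and each $\hat{g}_k$ irreducible in $\K[x,z]$.

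Next, I transfer this decomposition back to $f$. Substituting $z = y\,a_n$ yields
\[
a_n^{\,n-1}\,f(x,y) = \prod_{k=1}^m \hat{g}_k(x,\,y\,a_n),
\]
each factor on the right being an element of $\K[x,y]$, which I split as $\hat{g}_k(x,y\,a_n) = \gamma_k(x)\,\tilde{g}_k(x,y)$ with $\gamma_k\in\K[x]$ and $\tilde{g}_k$ primitive as a polynomial in $y$ over $\K[x]$. Comparing the $y$-contents of both sides via Gauss's lemma, and invoking the hypothesis that $f$ has no nonconstant factor in $\K[x]$, I obtain $\prod_k \gamma_k = a_n^{\,n-1}$ and $f = \tilde{g}_1\cdots \tilde{g}_m$, both up to a unit of $\K$.

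The main obstacle is to verify that each $\tilde{g}_k$ is irreducible in $\K[x,y]$. Since $\hat{f}$ is monic in $z$, its content in $z$ equals $1$, so the multiplicativity of content forces each $\hat{g}_k$ to be primitive in $z$, and Gauss's lemma then lifts the irreducibility of $\hat{g}_k$ from $\K[x][z]$ to $\K(x)[z]$. If some $\tilde{g}_k$ admitted a nontrivial factorization $\tilde{g}_k = p\cdot q$ in $\K[x,y]$ with $\deg_y p,\,\deg_y q \ge 1$, then substituting $y = z/a_n$ would yield the $\K(x)[z]$-identity
\[
\hat{g}_k(x,z) = \gamma_k(x)\,p(x,z/a_n)\,q(x,z/a_n),
\]
in which both $p(x,z/a_n)$ and $q(x,z/a_n)$ have positive $z$-degree and thus are nonunits in $\K(x)[z]$, contradicting the irreducibility of $\hat{g}_k$ over $\K(x)$. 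This confirms that $f$ is a product of at most $m\le n-j$ irreducibles in $\K[x,y]$; in particular, the case $j = n-1$ forces $m = 1$, so $f$ is irreducible.
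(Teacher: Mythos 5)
Your proof is correct and follows essentially the same route as the paper: the substitution $g(x,y)=a_n^{n-1}f(x,y/a_n)$, the identical degree computation for the new coefficients, and an appeal to Theorem \ref{th:2}. The only difference is that you spell out, via contents and Gauss's lemma, the transfer of the factorization of $\hat{f}$ back to $f$ --- a step the paper dismisses with ``one may easily check that $f$ and $g$ have the same number of irreducible factors'' --- and your version of that step is sound.
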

 The bound $n-j$ on the number of irreducible factors in Theorem \ref{th:3} can be also improved to $\min\{j,n-j\}$, provided we also consider the reciprocal of $f$ with respect to $y$, and impose a stronger condition on the degree of $a_j$, as follows.
\begin{theorem}\label{th:3withreciprocal}
Let $\K$ be a field and $f=a_0(x)+a_1(x)y+\cdots+a_n(x) y^n\in \K[x,y]$ with $n\geq 2$, $a_0,\ldots, a_n\in \K[x]$,  $a_0 a_n\neq 0$. Assume that $f$ has no nonconstant factor in $\K[x]$. If there exists an index $j$ with $1\leq j\leq n-1$ for which
\begin{eqnarray*}
\deg a_j & > & \max_{i<j}\thinspace \{\deg a_i+(j-i)\deg a_n\} \quad {\rm and}\\
\deg a_j & > & \max_{i>j}\thinspace \{\deg a_i+(i-j)\deg a_0\},
\end{eqnarray*}
then $f$ is a product of at most $\min\{j,n-j\}$ irreducible polynomials over $\K[x]$. In particular, if $j=1$ or $j=n-1$, then $f$ is irreducible over $\K[x]$.
\end{theorem}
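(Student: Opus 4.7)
The plan is to reduce to Theorem \ref{th:3} applied twice: once to $f$ itself, and once to its reciprocal with respect to $y$,
\[
f^*(x,y) = y^n f(x, 1/y) = a_n(x) + a_{n-1}(x)y + \cdots + a_0(x)y^n.
\]

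First I verify that the hypothesis of Theorem \ref{th:3} holds for $f$ with index $j$. For $i < j$ the required inequality $\deg a_j > \deg a_i + (j-i)\deg a_n$ is precisely our first assumption. For $i > j$ one has $(j-i)\deg a_n \leq 0$, so it suffices to check $\deg a_j > \deg a_i$, which in turn follows from the second assumption $\deg a_j > \deg a_i + (i-j)\deg a_0 \geq \deg a_i$. Theorem \ref{th:3} then gives that $f$ is a product of at most $n-j$ irreducible factors over $\K[x]$.

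Next I apply the same theorem to $f^*$ with index $n-j$. Writing $f^* = \sum_{i=0}^n b_i(x)y^i$ with $b_i = a_{n-i}$, one has $b_0 b_n = a_n a_0 \neq 0$, and any $p(x) \in \K[x]$ dividing $f^*$ must divide each $b_i$, hence each $a_i$, hence $f$ itself; so $f^*$ inherits the property of having no nonconstant factor in $\K[x]$. After the substitution $k = n-i$, the hypothesis of Theorem \ref{th:3} for $f^*$ at index $n-j$ rewrites as $\deg a_j > \deg a_i + (i-j)\deg a_0$ for all $i \neq j$. For $i > j$ this is exactly the second assumption; for $i < j$ it is implied by $\deg a_j > \deg a_i$, which in turn follows from the first assumption. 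Thus $f^*$ is a product of at most $n-(n-j)=j$ irreducible factors.

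Since $a_0 \neq 0$, no irreducible factor of $f$ is divisible by $y$, so taking the reciprocal with respect to $y$ is an involution that sends the irreducible factorization of $f$ to the irreducible factorization of $f^*$ with the same number of factors. Combining the two bounds yields at most $\min\{j, n-j\}$ irreducible factors over $\K[x]$, and the special cases $j = 1$ and $j = n-1$ produce the claimed irreducibility. The only real point of care is the bookkeeping of the two asymmetric hypotheses: the one governing $i<j$ uses $\deg a_n$ and is tailored to applying Theorem \ref{th:3} to $f$, while the one governing $i>j$ uses $\deg a_0$ and is tailored to $f^*$; everything else is symmetry.
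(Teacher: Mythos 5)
Your proposal is correct and follows essentially the same route as the paper: apply Theorem \ref{th:3} once to $f$ with index $j$ and once to the reciprocal $y^nf(x,1/y)$ with index $n-j$, after checking that the two stated inequalities (together with $\deg a_0,\deg a_n\geq 0$) supply the hypotheses in both applications. The paper phrases this as an equivalence between the stated pair of inequalities and the conjunction of the two hypotheses of Theorem \ref{th:3}, while you verify the two implications directly and also spell out the bookkeeping for the reciprocal; the substance is identical.
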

We notice here that Theorem \ref{th:2withreciprocal} can be recovered from Theorem \ref{th:3withreciprocal} as the special case when both the coefficients $a_0$ and $a_n$ belong to $\K$, since in that case the two inequalities above simply reduce to $\deg a_j > \max_{i\neq j}\thinspace \deg a_i$.

The results stated so far extend easily to polynomials in an arbitrary number of indeterminates. For any positive integer $r\geq 2$ and any polynomial $f\in \K[x_1,\ldots,x_r]$, let $\deg_r f$ be the degree of $f$ viewed as a polynomial in the variable $x_r$ and coefficients in $\K[x_1,\ldots,x_{r-1}]$. We will only mention here two such results, each of which is obtained by induction on $r$. For instance, as an immediate consequence of Theorem \ref{th:2}, we have the following factorization result.
\begin{theorem}\label{th:4}
Let $\K$ be a field and let $s\geq 3$ be a positive integer. Let $f=\sum_{i=0}^n a_ix_s^i\in \K[x_1,\ldots, x_s]$ with $n\geq 2$, $a_0,\ldots, a_{n-1}\in \K[x_1,\ldots, x_{s-1}]$, $a_n\in \K[x_1,\ldots, x_{s-2}]$, $a_0 a_n\neq 0$, and assume that there exists an index $j$ with $0\leq j\leq n-1$ for which
\begin{eqnarray*}
\deg_{s-1} a_j>\max_{i\neq j}\thinspace \deg_{s-1} a_i.
\end{eqnarray*}
Then $f$ is a product of at most $n-j$ irreducible polynomials over $\K[x_1,\ldots, x_{s-1}]$. In particular, if $j=n-1$, then $f$ is irreducible over $\K[x_1,\ldots, x_{s-1}]$.
\end{theorem}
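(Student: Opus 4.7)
The plan is to reduce Theorem \ref{th:4} to its bivariate prototype, Theorem \ref{th:2}, by treating the first $s - 2$ variables as parameters living in a field of rational functions. Concretely, I would set $\mathbb{F} := \K(x_1, \ldots, x_{s-2})$ and abbreviate $x := x_{s-1}$, $y := x_s$. Via the natural inclusion $\K[x_1, \ldots, x_{s-2}] \subset \mathbb{F}$, each coefficient $a_i$ with $i < n$ becomes an element of $\mathbb{F}[x]$ of $x$-degree equal to $\deg_{s-1} a_i$, while $a_n \in \K[x_1, \ldots, x_{s-2}]$ is a nonzero element of $\mathbb{F}$. The hypothesis $\deg_{s-1} a_j > \max_{i \neq j} \deg_{s-1} a_i$ then translates verbatim to the hypothesis of Theorem \ref{th:2} over $\mathbb{F}$ with the same index $j$, and that theorem yields a factorization of $f$ into at most $n - j$ irreducible polynomials in $\mathbb{F}[x, y]$.

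The next step is to transfer this factorization from $\mathbb{F}[x_{s-1}, x_s]$ back to $\K[x_1, \ldots, x_s] = \K[x_1, \ldots, x_{s-1}][x_s]$, where the theorem is stated. For this I would invoke Gauss's lemma applied to the UFD $R := \K[x_1, \ldots, x_{s-2}]$, whose field of fractions is $\mathbb{F}$: every factorization of $f$ in $\mathbb{F}[x, y]$ can be rescaled by suitable elements of $\mathbb{F}^{\times}$ so that the factors lie in $R[x, y] = \K[x_1, \ldots, x_s]$ and are $R$-primitive, and an $R$-primitive polynomial is irreducible in $\K[x_1, \ldots, x_s]$ precisely when it is irreducible in $\mathbb{F}[x, y]$. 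The bound $n - j$ on the number of irreducible factors of positive $(x_{s-1}, x_s)$-degree is therefore preserved, and specializing $j = n - 1$ gives $n - j = 1$, whence $f$ is irreducible.

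The main obstacle I anticipate is not the Perron-type inequality itself, which is entirely absorbed by Theorem \ref{th:2}, but the Gauss's lemma bookkeeping, namely articulating the bijection, up to $\mathbb{F}^{\times}$, between irreducible factors of positive $(x_{s-1}, x_s)$-degree in $\mathbb{F}[x, y]$ and those in $\K[x_1, \ldots, x_s]$. An alternative formulation proceeds by induction on $s$, with base case $s = 2$ being Theorem \ref{th:2} and each inductive step enlarging the parameter field from $\K(x_1, \ldots, x_{s-3})$ to $\K(x_1, \ldots, x_{s-2})$; this is essentially the same argument packaged step by step, while the direct one-step reduction outlined above captures the content at once.
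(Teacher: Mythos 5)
Your reduction---base-change to $\mathbb{F}=\K(x_1,\dots,x_{s-2})$, apply Theorem \ref{th:2} over $\mathbb{F}$, then descend by Gauss's lemma over the UFD $\K[x_1,\dots,x_{s-2}]$---is exactly the argument the paper has in mind (the paper only sketches the proof as an induction on the number of variables, which, as you note, is the same reduction performed one variable at a time). The only point worth making explicit in the Gauss step is that the hypothesis $a_n\in\K[x_1,\dots,x_{s-2}]$ does not by itself force $f$ to be primitive over $\K[x_1,\dots,x_{s-2}]$, so the bound really counts, as you carefully phrase it, the irreducible factors of positive degree in $(x_{s-1},x_s)$---a caveat inherited from the theorem's statement rather than a defect of your proof.
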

 We mention that Theorem \ref{th:4} generalizes Theorem C in \cite{Bonciocat2}, which is the irreducibility criterion that corresponds to $j=n-1$ in the above result.
Similarly, Theorem  \ref{th:3} gives the following  result for multivariate polynomials.
 \begin{theorem}\label{th:5}
 Let $\K$ be a field and let $s\geq 3$ be a positive integer. Let $f=\sum_{i=0}^n a_ix_s^i\in \K[x_1,\ldots, x_s]$ with $n\geq 2$, $a_0,\ldots, a_{n}\in \K[x_1,\ldots, x_{s-1}]$,  $a_0 a_n\neq 0$. Assume that $f$ has no nonconstant factor in $\K[x_1,\ldots, x_{s-1}]$. If  there exists an index $j$ with $0\leq j\leq n-1$ for which
\begin{eqnarray*}
\deg_{s-1} a_j>\max_{i\neq j}\thinspace \{\deg_{s-1} a_i+(j-i)\deg_{s-1} a_n\},
\end{eqnarray*}
then $f$ is a product of at most $n-j$ irreducible polynomials over $\K[x_1,\ldots, x_{s-1}]$. In particular, if $j=n-1$, then $f$ is irreducible over $\K[x_1,\ldots, x_{s-1}]$.
\end{theorem}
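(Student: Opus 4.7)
The plan is to reduce Theorem \ref{th:5} to its bivariate precursor Theorem \ref{th:3} by working over the rational function field $\mathbb{L} = \K(x_1,\ldots,x_{s-2})$ and viewing $f$ as a bivariate polynomial in the two indeterminates $x_{s-1}, x_s$ over $\mathbb{L}$. The coefficients of $f$ in $x_s$ then lie in $\mathbb{L}[x_{s-1}]$, and for each $i$ the degree of $a_i$ as an element of $\mathbb{L}[x_{s-1}]$ agrees with $\deg_{s-1} a_i$. Consequently the degree hypothesis of Theorem \ref{th:5} translates verbatim into that of Theorem \ref{th:3}, and the nonvanishing of $a_0 a_n$ also transfers without change.

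Before Theorem \ref{th:3} can be invoked one must verify that $f$ has no nonconstant factor in $\mathbb{L}[x_{s-1}]$. The plan here is a Gauss's lemma argument: if some $h \in \mathbb{L}[x_{s-1}]$ divided every $a_i$, then after clearing denominators and stripping the $\K[x_1,\ldots,x_{s-2}]$-content one may assume $h \in \K[x_1,\ldots,x_{s-1}]$ is primitive with respect to $x_{s-1}$, and Gauss's lemma for the UFD $\K[x_1,\ldots,x_{s-2}]$ (with fraction field $\mathbb{L}$) upgrades the divisibility to $\K[x_1,\ldots,x_{s-1}]$, which by hypothesis forces $h$ to be a unit. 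A parallel observation shows that the content of $f$ with respect to $\K[x_1,\ldots,x_{s-2}]$ is a nonzero element of $\K$, a fact that will be used in the descent step below.

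Theorem \ref{th:3} then produces a factorization $f = g_1 \cdots g_k$ in $\mathbb{L}[x_{s-1}, x_s]$ with $k \leq n-j$ and each $g_i$ irreducible over $\mathbb{L}[x_{s-1}]$. To descend this to $\K[x_1,\ldots,x_{s-1}][x_s]$, one clears denominators to replace each $g_i$ by an associate $\tilde g_i \in \K[x_1,\ldots,x_s]$, removes the $\K[x_1,\ldots,x_{s-2}]$-content of each $\tilde g_i$, and uses the constancy of the content of $f$ recorded above to conclude that the remaining overall scalar lies in $\K^\times$. This yields a factorization of $f$ into at most $n-j$ factors in $\K[x_1,\ldots,x_{s-1}][x_s]$, and each factor remains irreducible there since any nontrivial splitting would induce one in $\mathbb{L}[x_{s-1}, x_s]$, contradicting the irreducibility of the corresponding $g_i$. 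The main obstacle is the uniform bookkeeping of these two content/primitivity manipulations---transferring the ``no nonconstant factor'' assumption upward and then descending both the factor count and the irreducibility back down---after which the result reduces cleanly to Theorem \ref{th:3}.
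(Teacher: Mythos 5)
Your proof is correct and is essentially the argument the paper intends: the paper gives no detailed proof of Theorem \ref{th:5}, merely asserting that it follows from Theorem \ref{th:3} by induction on the number of indeterminates, and your reduction via the fraction field $\K(x_1,\ldots,x_{s-2})$, together with the Gauss's-lemma transfer of the ``no nonconstant factor'' hypothesis upward and the content-based descent of the factorization back to $\K[x_1,\ldots,x_s]$, is a complete way to carry that out. The two primitivity/content verifications you flag are exactly the points that need care, and both are handled correctly.
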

\section{Proofs of the main results}\label{sec:3}
 Our factorization results for polynomials over $\K[x]$ will be proved in a non-Archimedean setting, and by Gauss's Lemma, they will imply similar factorization results over $\K(x)$.

\begin{proof}[\bf Proof of Theorem \ref{th:0}]
i) We will first recall a well-known non-Archimedean absolute value, which will be used in our proofs. Given a field $\K$, let $\K(x)$ denote the field of fractions of the polynomial ring $\K[x]$. Let $\rho>1$ be a real number, define $\deg(0)=-\infty$, and for all $a, b\in \K[x]$ with $b\neq 0$, define
\begin{eqnarray*}
\|a\|_\rho=\rho^{\deg(a)}\quad {\rm and}\quad \|a/b\|_\rho=\rho^{\deg(a)-\deg(b)}=\|a\|_\rho/\|b\|_\rho.
\end{eqnarray*}
In particular, we have $\|a\|_\rho\geq 1$ for all nonzero $a\in\K[x]$.
We note that $\|\cdot\|_\rho$ is a nonnegative function, and that for all $a,b\in \K[x]$ we have $\deg (a)\leq \deg(b)$ if and only if $\|a\|_\rho\leq \|b\|_\rho$. Moreover, for all $a,b\in \K(x)$ we have
\begin{eqnarray*}
\|ab\|_\rho=\|a\|_\rho\|b\|_\rho \quad {\rm and}\quad \|a+b\|_\rho\leq \max\{\|a\|_\rho,\|b\|_\rho\},
\end{eqnarray*}
so $\|\cdot\|_\rho$ is a non-Archimedean absolute value on $\K(x)$. We fix now an algebraic closure $\overline{\K(x)}$ of $\K(x)$, which is unique up to isomorphism. We also fix an extension of this absolute value from $\K(x)$ to $\overline{\K(x)}$ (see \cite{EnglerPrestel}, for instance), which will also be denoted by $\|\cdot\|_\rho$.

Let us assume that $\theta_1,\dots ,\theta _n\in \overline{\K(x)}$ are the roots of $f$. We will next prove that our assumption on the degree of $a_0$ forces $\theta_1,\dots ,\theta _n$ to satisfy the inequalities $\|\theta_i\|_\rho>1$ for each $i$.
Indeed, the inequality $\deg a_0 >\max\thinspace \{\deg a_1, \dots, \deg a_n\}$ reads $\|a_0\|_\rho >\max_{i\geq 1}\|a_i\|_\rho$, so if we assume on the contrary that $\|\theta_i\|_\rho\leq 1$ for some $i$, then since $f(\theta_i)=0$, we have $a_0(x)=-a_1(x)\theta_i-\cdots-a_n(x)\theta_i^n$, which further implies that
\begin{eqnarray*}
\|a_0\|_\rho & = & \|-a_1\theta_i-\cdots-a_n\theta_i^n\|_\rho\\
 & \leq  & \max\thinspace \{\|a_1\theta_i\|_\rho,\ldots, \|a_n\theta_i^n\|_{\rho}\}\\
 & =  & \max\thinspace \{\|a_1\|_\rho \|\theta_i\|_\rho,\ldots, \|a_n\|_\rho \|\theta_i\|_\rho^n\}\\
 & \leq   & \max\thinspace \{\|a_1\|_\rho,\ldots, \|a_n\|_\rho\},
\end{eqnarray*}
a contradiction. Thus $\|\theta_i\|_\rho>1$ for each $i=1,\dots ,n$, as claimed.

Assume now that $f$ decomposes as
\[
f(x,y)=f_1(x,y)f_2(x,y)\cdots f_r(x,y),
\]
with $f_i\in \K[x][y]$ irreducible over $\K[x]$, and hence with $\deg_y(f_i)\geq 1$ for each $i=1,\ldots,r$, where we regard $f_i$ as a polynomial in $y$ with coefficients in $\K[x]$ and denote by $\alpha_i(x)\in \K[x]$ its leading coefficient. Then for each $i=1,\ldots,r$ we have $\alpha_i\neq 0$ and
\begin{eqnarray*}
\|f_i(x,0)/\alpha_i\|_\rho=\bigl\|\prod_\theta \theta\bigr\|_\rho=\prod_\theta \|\theta\|_\rho>1,
\end{eqnarray*}
where the product is over all zeros $\theta $ of $f_i$ (which are also zeros of $f$, thus satisfying $\|\theta\|_\rho>1$). This shows that $\rho^{\deg f_i(x,0)}>\rho^{\deg \alpha_i(x)}$ for each $i$, so $\deg f_i(x,0)>\deg \alpha_i(x)$ for each $i$, implying that none of the polynomials $f_i(x,0)$ is constant. Next, since
\[
a_0(x)=f(x,0)=f_1(x,0)\cdots f_r(x,0),
\]
and this is a decomposition of $a_0$ into $r$ nonconstant factors (not necessarily irreducible over $\K$), we conclude that $r$ can not exceed $\nu_0$, so $f$ is a product of at most $\nu_0$ irreducible polynomials over $\K[x]$.

ii) We argue in exactly the same way for
\[
\tilde{f}(x,y)=y^nf(x,1/y)=a_n(x)+a_{n-1}(x)y+\cdots +a_1(x)y^{n-1}+a_0(x)y^n,
\]
the reciprocal of $f$ with respect to $y$, which has the same number of irreducible factors as $f$ (counted with their multiplicities), and whose roots in $\overline{\K(x)}$ are precisely the inverses of the roots of $f$. We may also argue directly by observing that the inequality $\deg a_n >\max\thinspace \{\deg a_0, \dots, \deg a_{n-1}\}$ reads $\|a_n\|_\rho >\max_{i<n}\|a_i\|_\rho$, and forces the roots $\theta_1,\dots ,\theta_n$ to satisfy the inequalities $\|\theta_i\|_\rho< 1$ for each $i$. Indeed, if $f$ would have a root $\theta_i$ with $\|\theta_i\|_\rho\geq 1$, then since $f(\theta_i)/\theta_i^n=0$, we would have $a_n(x)=-\frac{a_{n-1}(x)}{\theta_i}-\cdots-\frac{a_0(x)}{\theta_i^n}$, further implying that
\begin{eqnarray*}
\|a_n\|_\rho & = & \|-a_{n-1}(x)/\theta_i -\cdots-a_0(x)/\theta_i^n\|_\rho\\
 & \leq  & \max\thinspace \{\|a_{n-1}(x)/\theta_i \|_\rho,\dots ,\|a_0(x)/\theta_i^n\|_{\rho}\}\\
 & =  & \max\thinspace \{\|a_{n-1}\|_\rho /\|\theta_i\|_\rho,\dots, \|a_0\|_\rho /\|\theta_i\|_\rho^n\}\\
 & \leq   & \max\thinspace \{\|a_{n-1}\|_\rho,\dots, \|a_0\|_\rho\},
\end{eqnarray*}
a contradiction. With the same notations, this time we obtain 
$\deg f_i(x,0)<\deg \alpha_i(x)$ for each $i$, implying that none of the polynomials $\alpha_i(x)$ is constant, so the equality $a_n=\alpha_1\cdots \alpha_r$ shows that $f$ is a product of at most $\nu_n$ irreducible polynomials over $\K[x]$.
\end{proof}
\begin{proof}[\bf Proof of Theorem \ref{th:1}]
i) We already proved that all the roots $\theta $ of $f$ satisfy $\|\theta\|_\rho>1$ if $\deg a_0>\max\thinspace \{\deg a_1, \dots, \deg a_n\}$, or satisfy $\|\theta\|_\rho<1$ if $\deg a_n >\max\thinspace \{\deg a_0, \dots, \deg a_{n-1}\}$.

Let us first note that the existence of $q$ makes sense, since $a_0$ is a nonconstant polynomial. Now let us suppose again that
\[
f(x,y)=f_1(x,y)f_2(x,y)\cdots f_r(x,y)
\]
with $f_i\in \K[x][y]$ irreducible over $\K[x]$ and hence with $\deg_y(f_i)\geq 1$ for each $i=1,\ldots,r$, and denote again the leading coefficient of $f_i$ by $\alpha _i$. Since $\deg a_n\geq \deg a_0-\deg q\geq \deg q>0$, it follows that $a_n$ is a nonconstant polynomial too, and so, $\nu_n\geq 1$. We may therefore assume that $r\geq 2$, since the conclusion that $r\leq \min \{\nu_0,\nu_n\}$ holds trivially for $r=1$.

 By Theorem \ref{th:0}, we already have $r\leq \nu_0$.
On the other hand, if we fix an index $i\in\{1,\dots ,r\}$ and use the fact that $f_1(x,0)\cdots f_r(x,0)=a_0$ along with the equality $\alpha_1\cdots \alpha_r=a_n$,
we successively deduce that
\begin{eqnarray*}
\frac{\|a_0\|_\rho}{\|q\|_\rho } & \leq & \|a_n\|_\rho
<\|a_n\|_\rho \prod_{j=1,~j\neq i}^r\frac{\|f_j(x,0)\|_\rho}{\|\alpha_j\|_\rho}\\
&=&\|a_n\|_\rho  \frac{\|f_1(x,0)\cdots f_r(x,0)\|_\rho \|\alpha_i\|_{\rho }}{\|f_i(x,0)\|_{\rho } \|\alpha_1\cdots\alpha_r\|_\rho}
=\frac{\|a_0\|_\rho\|\alpha_i\|_\rho }{\|f_i(x,0)\|_\rho},
\end{eqnarray*}
which yields $\|\alpha_i\|_\rho>\|f_i(x,0)\|_\rho/\|q\|_\rho$. With the same reasoning as in the proof of Theorem \ref{th:0}, we deduce that $\deg f_i(x,0)>\deg \alpha_i(x)$, so $f_i(x,0)$ must be a nonconstant factor of $a_0$, and hence we must have $\deg f_i(x,0)\geq \deg q$, or equivalently, $\|f_i(x,0)\|_\rho\geq \|q(x)\|_\rho$. In view of this, we conclude that
\[
\|\alpha_i\|_\rho>1,
\]
uniformly for $i=1,\ldots,r$.  Thus each one of the polynomials $\alpha _1,\dots ,\alpha _r$ must have degree at least $1$, which shows that in our decomposition of $a_n$ as a product of $r$ nonconstant factors
\begin{equation*}
 a_n(x)=\alpha_1(x)\cdots\alpha_r(x),
\end{equation*}
$r$ can not exceed $\nu_n$. This completes the proof in the first case.

 For the proof of ii) we apply i) to the polynomial $\tilde{f}(x,y)=y^nf(x,1/y)$, the reciprocal of $f$ with respect to $y$.
\end{proof}

 To prove Theorem \ref{th:2}, we will need an extension to the case of multivariate polynomials of the following Theorem of Mayer \cite{Mayer} on the location of the zeros of a polynomial with real or complex coefficients, in which
one of the coefficients has absolute value greater than the sum of the absolute values of all the other coefficients.
\begin{thmx}[Mayer \cite{Mayer}]\label{th:A}
Let $\K\in \{\mathbb{R},\mathbb{C}\}$, and let $|\cdot|$ denote the Euclidean norm of $\K$.
Let $f=a_0+a_1y+\cdots+a_n y^n\in \K[y]$ be a nonconstant polynomial such that there exists an index $j$ with $0\leq j\leq n$ for which
$|a_j|>\sum_{i=0,~i\neq j}^n |a_i|$.
Then $f$ has $j$ zeros in the disk $|z|<1$, and $n-j$ zeros outside the disk $|z|\leq 1$ in the complex plane.
\end{thmx}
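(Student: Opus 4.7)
The plan is to apply Rouch\'e's theorem to compare $f(y)$ with the single monomial $a_j y^j$ on the unit circle $|y|=1$ in the complex plane. Since real zeros of a polynomial with real coefficients are just a subset of its complex zeros, it suffices to treat the case $\K=\mathbb{C}$; the case $\K=\mathbb{R}$ will then follow by regarding $f$ as a complex polynomial.

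First, I would verify the hypothesis of Rouch\'e's theorem on the unit circle. For $|y|=1$ one has $|a_j y^j|=|a_j|$, while
\[
|f(y)-a_j y^j|=\Bigl|\sum_{i=0,\ i\neq j}^n a_i y^i\Bigr|\leq \sum_{i=0,\ i\neq j}^n |a_i|<|a_j|=|a_j y^j|,
\]
the final strict inequality being exactly the assumption of the theorem. In particular, this rules out any zero of $f$ on the circle $|y|=1$.

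Next, by Rouch\'e's theorem applied to the open unit disk, $f(y)$ and $a_j y^j$ have the same total number of zeros, counted with multiplicity, in the region $|y|<1$. The monomial $a_j y^j$ has a zero of order exactly $j$ at the origin and no other zeros, so $f$ has precisely $j$ zeros (with multiplicity) in $|y|<1$. Since $f$ has degree $n$ and no zeros on $|y|=1$, the remaining $n-j$ zeros must lie in $|y|>1$, which gives the asserted distribution.

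There is no real obstacle in this argument; the only point to be careful about is ensuring that the inequality $|f(y)-a_j y^j|<|a_j y^j|$ is strict on the whole circle $|y|=1$ (so that Rouch\'e applies and no zeros are on the boundary), and this is immediate from the hypothesis. The degenerate values $j=0$ and $j=n$ pose no difficulty: in the former case Rouch\'e is applied with the constant $a_0$, yielding $0$ zeros inside and all $n$ outside, and in the latter case it is applied with $a_n y^n$, yielding all $n$ zeros inside.
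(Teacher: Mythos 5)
Your Rouch\'e argument is correct: on $|y|=1$ the hypothesis gives $|f(y)-a_jy^j|\leq\sum_{i\neq j}|a_i|<|a_j|=|a_jy^j|$, so $f$ and $a_jy^j$ have the same number of zeros in the open unit disk, namely $j$, no zeros lie on the circle, and the remaining $n-j$ zeros of the degree-$n$ polynomial lie outside. (You are implicitly using $a_n\neq 0$ when $j<n$ to know $f$ has exactly $n$ zeros; that is the intended reading of the statement.) This is, however, a genuinely different route from the one the paper has in mind. The paper does not reprove Theorem~\ref{th:A}; it cites Mayer's original argument, which it describes as resting on elementary probability, the existence--uniqueness theory for linear difference equations, and only the multiplicative and triangle-inequality properties of the absolute value of the coefficients --- and it proves the analogue needed in the paper (Theorem~\ref{th:6}, over $\overline{\K(x)}$ with the non-Archimedean absolute value $\|\cdot\|_\rho$) either by transporting Mayer's argument verbatim or by a Newton polygon argument via Dumas' theorem. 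The trade-off is exactly the point the authors emphasize: Rouch\'e's theorem is the shortest and most standard proof over $\mathbb{C}$, but it is a contour-integration statement tied to complex analysis and does not transfer to the non-Archimedean setting that the rest of the paper requires, whereas Mayer's proof (and the Newton polygon proof) uses only properties of the absolute value that survive the passage to $\|\cdot\|_\rho$ on $\overline{\K(x)}$. So your proof is a perfectly good proof of Theorem~\ref{th:A} as stated, but it would not serve as a template for Theorem~\ref{th:6}, which is how the paper actually uses this result.
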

 The elegance of Mayer's proof of Theorem \ref{th:A} arises from the fact that it relies only on the basic notions of probability theory, the existence and uniqueness theorem for initial value problems in linear difference equations, and the Euclidean absolute value of the coefficients, which extend naturally to the non-Archimedean setting. Before stating the corresponding extension, we need to specify a topology on $\overline{\K(x)}$. The absolute value $\|\cdot\|_\rho$  induces a topology on $\overline{\K(x)}$ via the metric defined by taking the distance between $a$ and $b$ to be $\|a-b\|_\rho$
for all $a,b\in \overline{\K(x)}$. In view of this, for any $\varepsilon>0$, it is meaningful to define open and closed disks of radius $\varepsilon$ in $\overline{\K(x)}$ denoted by $\|y\|_\rho<\varepsilon$ and $\|y\|_\rho\leq \varepsilon$, and defined as the sets $\{a\in \overline{\K(x)}~|~\|a\|_\rho<\varepsilon\}$ and $\{a\in \overline{\K(x)}~|~\|a\|_\rho\leq \varepsilon\}$, respectively. We may now state the following extension of Theorem \ref{th:A} for bivariate polynomials.
  \begin{theorem}[Mayer]\label{th:6}
Let $\K$ be a field. Let $f=a_0(x)+a_1(x)y+\cdots+a_n(x) y^n\in \K[x,y]$ be a polynomial with $n\geq 2$, $a_0,a_1,\ldots,a_n\in \K[x]$, $a_0a_n\neq 0$, such that there exists an index $j$ with $0\leq j\leq n$ for which
\begin{eqnarray*}
\|a_j\|_{\rho}>\max_{i\neq j}\|a_i\|_{\rho}.
\end{eqnarray*}
Then $f$ has $j$ zeros in the disk $\|y\|_\rho<1$, and $n-j$ zeros outside the disk  $\|y\|_\rho\leq 1$ in $\overline{\K(x)}$.
\end{theorem}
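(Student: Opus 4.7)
The plan is to translate the dominant-coefficient hypothesis into a statement about the Newton polygon of $f$ relative to the valuation $v(\cdot):=-\log_\rho\|\cdot\|_\rho$, and then invoke Dumas' theorem over the algebraically closed valued field $\overline{\K(x)}$, a tool the paper has already flagged as available via \cite{Dumas}.

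First, I would rewrite the hypothesis as $v(a_j)<v(a_i)$ for every $i\neq j$, and consider the planar points $P_i=(i,v(a_i))$, $i\in\{0,1,\dots,n\}$ with $a_i\neq 0$. The Newton polygon $\mathcal{N}(f)$ is the lower convex hull of $\{P_0,\dots,P_n\}$ read from $P_0$ to $P_n$. Since $P_j$ is strictly the lowest vertex among them, it must itself be a vertex of $\mathcal{N}(f)$, and it splits the polygon into a descending part ending at $P_j$ and an ascending part starting there.

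Next I would exploit convexity. The slopes of the segments of $\mathcal{N}(f)$ are nondecreasing from left to right, and $P_j$ is a strict minimum; hence the last segment meeting $P_j$ from the left has strictly negative slope, and the first segment leaving $P_j$ to the right has strictly positive slope. By the monotonicity of slopes, every segment strictly to the left of $P_j$ has negative slope, with total horizontal extent $j$, and every segment strictly to the right has positive slope, with total horizontal extent $n-j$. I then invoke Dumas' theorem for the algebraically closed valued field $(\overline{\K(x)},\|\cdot\|_\rho)$: to each segment of slope $s$ and horizontal length $\ell$ there correspond exactly $\ell$ zeros $\theta$ of $f$ (counted with multiplicity) with $v(\theta)=-s$, equivalently $\|\theta\|_\rho=\rho^s$. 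Consequently the $j$ zeros arising from segments of negative slope all satisfy $\|\theta\|_\rho<1$, while the $n-j$ zeros arising from segments of positive slope satisfy $\|\theta\|_\rho>1$, which is exactly the assertion of the theorem.

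The main obstacle I foresee is technical rather than conceptual: one must justify applying Dumas' theorem when $\K(x)$ need not be complete with respect to $\|\cdot\|_\rho$. The standard remedy is to pass to the completion of $\K(x)$ and a fixed algebraic closure thereof (where uniqueness of the extension of the absolute value is automatic), apply the Newton polygon theorem, and then transport the roots back to the extension $\|\cdot\|_\rho$ on $\overline{\K(x)}$ chosen at the beginning of the proof of Theorem~\ref{th:0}; alternatively, one can cite a version of Dumas' theorem formulated directly for polynomials over an arbitrary (not necessarily complete) valued field. Once this foundational point is settled, the entire argument reduces to reading off slopes from a Newton polygon that is pinned down by the single dominant vertex $P_j$.
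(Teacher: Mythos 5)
Your argument is correct and coincides with the paper's own alternative proof of Theorem~\ref{th:6}, which likewise takes the Newton polygon of $f$ with respect to the valuation $v(a_i)=-\deg a_i$, observes that $P_j$ is the unique lowest vertex so that all edges to its left have negative slope (total width $j$) and all edges to its right have positive slope (total width $n-j$), and concludes via Dumas' theorem. Your remark about completeness is a reasonable technical caution, but otherwise the route is the same.
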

\begin{proof}[\bf Proof of Theorem \ref{th:6}]
Recall that our absolute value also satisfies the triangle inequality, and note that by using  the condition $\|a_j\|_{\rho}>\max _{i\neq j}\|a_i\|_{\rho}$ in the statement of Theorem \ref{th:6} for a choice of $\rho>n$, we may deduce that
\begin{eqnarray*}
\rho^{\deg a_j}\geq \rho^{1+\max\limits _{i\neq j}\thinspace \deg a_i}&>&n\rho^{\thinspace \max\limits _{i\neq j}\thinspace \deg a_i}\geq \sum_{i=0,~i\neq j}^n \rho^{\deg a_i},
\end{eqnarray*}
which shows that $\|a_j\|_{\rho}$ also satisfies the inequality
\begin{eqnarray*}
\|a_j\|_{\rho}>\sum_{i=0,~i\neq j}^n \|a_i\|_\rho.
\end{eqnarray*}
The proof of Theorem \ref{th:6} can now be easily obtained by following the lines in the original proof of Mayer \cite{Mayer} and replacing the absolute value $|\cdot|$ by the absolute value $\|\cdot\|_{\rho}$, and by also replacing the field of real numbers or the field of complex numbers by the field $\overline{\K(x)}$. The details of the proof will be omitted.

 We will give here an alternative proof of Theorem \ref{th:6} that relies on a Newton polygon argument. So let us construct the Newton polygon $NP(f)$ of the polynomial $f(x,y)=a_0(x)+a_1(x)y+\cdots +a_{n}(x)y^n$ with respect to the valuation $v$ given by $v(a_i)=-\deg a_i$, that is the lower convex hull of the set of points $(i,v(a_i))$ with $i=0,\dots, n$ and $a_i\neq 0$. Note that the extension of $v$ to $\overline{\K(x)}$ will be also denoted by $v$. Since $\deg a_j>\max_{i\neq j}\deg a_i$, we deduce that the point $P_j=(j,-\deg a_j)$ is the only vertex of $NP(f)$ with the smallest $y$-coordinate. This implies that all the edges in $NP(f)$ at the left of $P_j$ (if any) have negative slopes, while all the edges of $NP(f)$ at the right of $P_j$ (if any) have positive slopes. Now recall that by Dumas' Theorem, all the linear factors of $f$ over $\overline{\K(x)}$, that is all the factors of the form $x-\theta$ with $\theta\in \overline{\K(x)}$ a root of $f$, must contribute to $NP(f)$ via translates with their own Newton polygon, that is with an edge having endpoints $(0,v(\theta))$ and $(1,0)$, and hence having slope equal to $-v(\theta)$. Thus, at the left of $P_j$ there are precisely $j$ translates of the edges of width $1$ that have negative slopes (possibly none, if $j=0$), while at the right of $P_j$ there are precisely $n-j$ translates of the edges of width $1$ that have positive slopes (possibly none, if $j=n$). This shows that $f$ has $j$ roots $\theta$ (multiplicities counted) with positive valuation $v(\theta)$ and hence with absolute value $\|\theta\|_{\rho}<1$, and $n-j$ roots $\theta $ (multiplicities counted) with negative valuation $v(\theta)$ and hence with absolute value $\|\theta\|_{\rho}>1$.
\end{proof}
To prove Theorem \ref{th:2}, we will now make use of Theorem \ref{th:6}, as follows.
\begin{proof}[\bf Proof of Theorem \ref{th:2}]
Note that $\|a_i\|_\rho\geq 1$ for all $i=0,\ldots,n-1$, while $\|a_n\|_\rho=1$, since  $a_n\in \K$. Our hypothesis on $\deg a_j$ reads
 \begin{eqnarray*}
\|a_j\|_{\rho}>\max_{i\neq j}\|a_i\|_{\rho},
 \end{eqnarray*}
which shows that $f$ satisfies the hypothesis of Theorem \ref{th:6}. Therefore $f$ has $j$ zeros lying in the disk $\|y\|_\rho< 1$, and the remaining $n-j$ zeros lying outside the disk $\|y\|_\rho\leq 1$ in the algebraic closure $\overline{\K(x)}$.

Suppose now that $f(x,y)=f_1(x,y)f_2(x,y)\cdots f_{r}(x,y)$ is the product of $r$ irreducible polynomials $f_1,\ldots,f_r$ over $\K[x]$, for some integer $r$ with $1\leq r\leq n$. Since $a_0\neq 0$ we must have
\begin{eqnarray*}
1\leq \|a_0\|_\rho=\|f(x,0)\|_\rho=\|f_1(x,0)\|_\rho\cdots\|f_r(x,0)\|_\rho,
\end{eqnarray*}
with $\|f_i(x, 0)\|_\rho\geq 1$ for each $i=1,\ldots,r$. Assume on the contrary that $r>n-j\geq 1$. Then $r>1$ and there exists at least one index $t$ with $1\leq t\leq r$ for which all the zeros of $f_{t}$ lie in the disk $\|y\|_\rho<1$ (for otherwise each of the $r$ factors $f_1,\dots ,f_r$ would have at least one zero outside the disk $\|y\|_\rho\leq 1$, and hence $f$ would be forced to have more than $n-j$ such zeros). Recall now that the leading coefficient of $f$ belongs to $\K$, so the leading coefficient $\alpha_{t}$ of $f_{t}$ must also belong to $\K$, implying that $\|\alpha_t\|_{\rho }=1$. Therefore, if we write  $f_{t}(x,y)=\alpha_{t} \prod_{\theta}(y-\theta)$, with the product running over all the zeros $\theta $ of $f_{t}$ in $\overline{\K(x)}$, we deduce that
\begin{eqnarray*}
1\leq \|f_{t}(x,0)\|_\rho=\|\alpha_{t}\|_\rho\prod_\theta \|\theta\|_\rho<1,
\end{eqnarray*}
which is a contradiction. This completes the proof of the theorem.
\end{proof}
\begin{proof}[\bf Proof of Theorem \ref{th:2withreciprocal}]
Since $a_0\in \K$, the reciprocal $y^nf(x,1/y)$ of $f(x,y)$ with respect to $y$ satisfies the hypotheses in Theorem \ref{th:2} with $n-j$ instead of $j$ (since $y^nf(x,1/y)=b_0(x)+b_1(x)y+\cdots +b_n(x)y^n$ with $b_j=a_{n-j}$ for each $j$), so it is the product of at most $j$ irreducible factors over $\K[x]$. Since the irreducible factors of $y^nf(x,1/y)$ are precisely the reciprocals with respect to $y$ of the irreducible factors of $f$, we conclude that $f$ too is the product of at most $j$ irreducible factors over $\K[x]$.
\end{proof}
\begin{proof}[\bf Proof of Theorem \ref{th:3}]
Let $g(x,y)=a_n^{n-1}f(x,y/a_n)$. We observe that
 \begin{eqnarray*}
g(x,y)=a_0(x)a_n(x)^{n-1}+\cdots+a_j(x)a_n(x)^{n-1-j}y^j+\cdots+a_{n-1}(x)y^{n-1}+y^{n},
 \end{eqnarray*}
which shows that $g\in \K[x][y]$ and $g$ is monic. In particular, the leading coefficient of $g$ belongs to $\K$, and moreover, one may easily check that $f$ and $g$ have the same number of irreducible factors over $\K[x]$, counted with their multiplicities. Thus, it will be sufficient to prove our result for $g$ instead of $f$.
Using our hypothesis on the degree of $a_j$, we observe that the coefficients of $g$ satisfy
 \begin{eqnarray*}
\deg(a_ja_n^{n-1-j})&=&\deg(a_j)+(n-1-j)\deg(a_n)\\
&>&\max_{i\neq j}\thinspace \{\deg(a_i)+(j-i)\deg(a_n)\}+(n-1-j)\deg(a_n)\\
&=&\max_{i\neq j}\thinspace \{\deg(a_i)+(n-1-i)\deg(a_n)\}\\&=&\max_{i\neq j}\thinspace \deg(a_ia_n^{n-1-i}).
 \end{eqnarray*}
Consequently, by Theorem \ref{th:2}, the polynomial $g$ (and hence $f$ too) is a product of at most $n-j$ irreducible polynomials over $\K[x]$, and this completes the proof of the theorem.
\end{proof}
\begin{proof}[\bf Proof of Theorem \ref{th:3withreciprocal}]
By Theorem \ref{th:3} we know that $f$ is a product of at most $n-j$ irreducible polynomials over $\K[x]$, if
\begin{equation}\label{first}
\deg a_j>\max_{i\neq j}\thinspace \{\deg a_i+(j-i)\deg a_n\}.
\end{equation}
Let us consider now the reciprocal of $f$ with respect to $y$, that is
\[
y^nf(x,1/y)=a_{n}(x)+a_{n-1}(x)y+\cdots +a_0(x)y^n.
\]
By Theorem \ref{th:3} with $y^nf(x,1/y)$ instead of $f$ we see that $y^nf(x,1/y)$ (and hence $f$ too) is a product of at most $j$ irreducible polynomials over $\K[x]$, if
\[
\deg a_j>\max_{i\neq n-j}\thinspace \{\deg a_{n-i}+(n-j-i)\deg a_0\},
\]
or equivalently, after relabelling the indices, if
\begin{equation}\label{second}
\deg a_j>\max_{i\neq j}\thinspace \{\deg a_i+(i-j)\deg a_0\}.
\end{equation}
Observe now that conditions (\ref{first}) and (\ref{second}) will both hold if and only if $\deg a_j$ satisfies the pair of inequalities
\begin{eqnarray*}
\deg a_j & > & \max_{i<j}\thinspace \{\deg a_i+(j-i)\deg a_n\} \quad {\rm and}\\
\deg a_j & > & \max_{i>j}\thinspace \{\deg a_i+(i-j)\deg a_0\}.
\end{eqnarray*}
This completes the proof of the theorem.
\end{proof}
\section{Examples}\label{sec:4}
 We will provide here some examples of infinite families of polynomials in $\mathbb{Z}[x,y]$ whose factorization properties can be deduced using the results proved in the preceding section. In all our examples, we will assume that $n\geq 2$.
\begin{example}
For any polynomials $a_1,\dots ,a_n\in\mathbb{Z}[x]$ with $\max\thinspace \{\deg a_1,\dots ,\deg a_n\}\leq 2^n-1$ and $a_n\neq 0$, the polynomial
\begin{eqnarray*}
f_1=1+x^{2^n}+a_1(x)y+\cdots+a_n(x)y^n\in \mathbb{Z}[x,y]
\end{eqnarray*}
satisfies the hypotheses of Corollary \ref{c2} with $a_0=1+x^{2^n}$, which is irreducible, being a cyclotomic polynomial. Therefore $f_1$ must be irreducible over $\mathbb{Z}[x]$.
\end{example}
\begin{example}
For any positive integer $k$, and any nonzero polynomial $b(x)\in\mathbb{Z}[x]$ with $\deg b\leq 2^k+1$ and such that $1+x^{2^k}$ is not a factor of $b$, consider the bivariate polynomial
\begin{eqnarray*}
f_2=(1+x^{2^k})(1+x^2)+b(x)(y+y^2+\cdots+y^{n-1})+(1+x^{2^k})y^n\in \mathbb{Z}[x,y].
\end{eqnarray*}
It is easy to check that $f_2$ satisfies the hypothesis of Corollary \ref{c2} with $a_0=(1+x^{2^k})(1+x^2)$, $q=1+x^{2}$, $a_1=\ldots=a_{n-1}=b$, and $a_n=1+x^{2^k}$, which is irreducible over $\mathbb{Z}$ and satisfies the condition that $\deg a_n\geq \deg a_0-\deg q$. We conclude that $f_2$ is irreducible over $\mathbb{Z}[x]$.
\end{example}
\begin{example}
For any fixed index $j$ with $0\leq j\leq n-1$, and any fixed, arbitrarily chosen nonzero integers $c_0,\dots ,c_n$, the polynomial
\begin{eqnarray*}
f_3=c_0x+c_1xy+\cdots+c_{j-1}xy^{j-1}+c_jx^2y^j+c_{j+1}y^{j+1}+\cdots+c_ny^{n}\in \mathbb{Z}[x,y],
\end{eqnarray*}
satisfies the hypothesis of Theorem \ref{th:2} with
\begin{eqnarray*}
a_i=c_ix\ {\rm for}\ 0\leq i\leq j-1,\ a_j=c_jx^{2},\ {\rm and}\ a_{i}=c_i\ {\rm for}\ j+1\leq i\leq n,
\end{eqnarray*}
so $f_3$ must be a product of at most $n-j$ irreducible polynomials over $\mathbb{Z}[x]$.
\end{example}
\begin{example}
For any fixed index $j$ with $1\leq j\leq n-1$, and any fixed, arbitrarily chosen nonzero integers $c_0,\dots ,c_n$, the polynomial
\begin{eqnarray*}
f_4=c_0+c_1xy+\cdots +c_{j-1}x^{j-1}y^{j-1}+c_{j}x^{j+1}y^j+c_{j+1}xy^{j+1}+\cdots+c_nxy^{n}\in \mathbb{Z}[x,y],
\end{eqnarray*}
satisfies the hypothesis of Theorem \ref{th:3} with
\begin{eqnarray*}
a_i=c_ix^{i}\ {\rm for}\ 0\leq i\leq j-1,\ a_j=c_jx^{j+1},\ {\rm and}\ a_{i}=c_ix\ {\rm for}\ j+1\leq i\leq n,
\end{eqnarray*}
since we have
\begin{eqnarray*}
\deg a_j=j+1>j&=&\deg a_{j-1}+(j-(j-1))\\&=&\max_{0\leq i\leq n,~i\neq j}\{\deg a_i+(j-i)\deg a_n\}.
\end{eqnarray*}
Therefore, $f_4$ is a product of at most $n-j$ irreducible polynomials over $\mathbb{Z}[x]$. On the other hand, since $\deg a_0=0$ and $\deg a_j>\max_{i\neq j}\deg a_i$, we conclude by Theorem \ref{th:3withreciprocal} that actually $f_4$ is a product of at most $\min\{j,n-j\}$ irreducible polynomials over $\mathbb{Z}[x]$.
\end{example}
\begin{example}
For a more general example that shows that the bound in Theorem \ref{th:2withreciprocal} is best possible, let $n$ be a positive integer, $j<n$ a divisor of $n$, and consider the polynomial
\[
f_5(x,y)=(1+xy+y^{\frac{n}{j}})^j\in\mathbb{Z}[x,y],
\]
written also as $f_5=a_0(x)+a_1(x)y+\cdots a_n(x)y^n$, say, with $a_i\in\mathbb{Z}[x]$, and $a_0=a_n=1$. It is easy to see that $f_5$ has a unique coefficient of maximal degree, namely $a_j$, and $\deg a_j=j$. If we ignore the fact that $f_5$ is the $j$th power of $1+xy+y^{\frac{n}{j}}$, and we are only looking at the degrees of its coefficients, we conclude by Theorem \ref{th:2withreciprocal} that $f_5$ is a product of at most $j$ irreducible factors over $\mathbb{Q}[x]$. This is indeed the case, since the polynomial $1+xy+y^{\frac{n}{j}}$ is irreducible over $\mathbb{Q}[x]$. To see this, we will actually prove the more general fact that all the polynomials of the form $1+xy+y^n$ with $n\geq 2$ are irreducible over $\mathbb{Q}[x]$. Denote $1+xy+y^n$ by $g$ and consider the Newton polygon $NP(g)$ of $g$ with respect to the valuation $-\deg(\cdot)$ on $\mathbb{Q}[x]$. It is easy to see that $NP(g)$ has only two segments, the left-one with endpoints $A=(0,0)$ and $B=(1,-1)$, and the one on the right with endpoints $B=(1,-1)$ and $C=(n,0)$. None of these two segments contain lattice points other than their endpoints, so by Dumas' Theorem, $g$ can have at most two irreducible factors, say $h_1$ and $h_2$, with $\deg _yh_1=1$ and $\deg _yh_2=n-1$, say $h_1=b_0(x)+b_1(x)y$ and $h_1=c_0(x)+c_1(x)y+\cdots c_{n-1}(x)y^{n-1}$, and whose Newton polygons $NP(h_1)$ and $NP(h_2)$ should be translates of the two segments $\overline{AB}$ and $\overline{BC}$. On the other hand the equality $g=h_1h_2$ would imply $b_0c_0=1$, so both $b_0$ and $c_0$ should have degree zero, meaning that $NP(h_1)$ should be precisely the segment $\overline{AB}$, thus forcing $b_1$ to have degree $1$. This obviously can not hold, since $b_1c_{n-1}$ should be also equal to $1$. Thus $g$ must be irreducible over $\mathbb{Q}[x]$, as claimed.

\end{example}
\textbf{\large Acknowledgments.}
Ms. Rishu Garg is thankful to the Council of Scientific and Industrial Research (CSIR) for providing her Junior Research Fellowship (JRF) wide grant no. CSIRAWARD/JRF-NET2022/11769 for carrying the present research. This paper is a part of the research work undertaken by her for her Ph.D. thesis.

The present research is also supported by Science and Engineering Research Board (SERB), a
statutory body of Department of Science and Technology (DST), Government of India through the project grant no. MTR/2017/000575 awarded to the last author under the MATRICS Scheme.
\subsection*{Disclosure statement}
The authors report that there are no competing interests to declare.

\end{document}